\documentclass[12pt]{article}
\usepackage{pgf,tikz}
\usepackage{calc,graphicx, complexity}
\usepackage{graphics}
\everymath{\displaystyle}
\usepackage{graphicx}
\usepackage{color}
\usepackage{amssymb}
\usepackage{amsmath}
\newtheorem{prethm}{{\bf Theorem}}

\newenvironment{thm}{\begin{prethm}{\hspace{-0.5
				em}{\bf .}}}{\end{prethm}}
\newtheorem{prelemma}{{\bf Lemma}}

\newtheorem{preex}{{\bf Example}}

\newtheorem{preprop}{{\bf Proposition}}

\newenvironment{prop}{\begin{preprop}{\hspace{-0.5em}{\bf .}}}{\end{preprop}}
\newtheorem{precor}{{\bf Corollary}}

\newenvironment{cor}{\begin{precor}{\hspace{-0.5
				em}{\bf .}}}{\end{precor}}
\newtheorem{preremark}{{\bf Remark}}

\newtheorem{preprob}{{\bf Problem}}

\newtheorem{predefin}{{\bf Definition}}

\newenvironment{defin}{\begin{predefin}{\hspace{-0.5
				em}{\bf .}}}{\end{predefin}}
\newtheorem{preconj}{{\bf Conjecture}}

\newtheorem{preprobb}{{\bf Problem}}

\newtheorem{prelem}{{\bf Theorem}}

\newtheorem{precla}{{\bf Claim}}

\newenvironment{proof}{{\bf Proof.}\rm }{\hfill{$\Box$}}

\newtheorem{presolution}{{\bf Solution.}}

\def\newpic#1{}
\def\qed{\ifhmode\unskip\nobreak\fi\quad\ifmmode\Box\else$\Box$\fi}

\title{\vspace{0cm}\Large\bf\noindent Improved bounds on the b-chromatic number using the independence and chromatic numbers}
\author{\large\bf Manouchehr Zaker\footnote{mzaker@iasbs.ac.ir}
\vspace{5mm}\\
Department of Mathematics,\\
Institute for Advanced Studies in Basic Sciences,\\
Zanjan 45137-66731, Iran\\
}

\date{}

\begin{document}
\maketitle
\begin{abstract}
\noindent A b-coloring of a graph $G$ is a proper vertex coloring where each color class contains at least one vertex (a b-vertex) adjacent to a vertex in every other color class. The maximum number of colors in such a coloring is the b-chromatic number, ${\rm b}(G)$. A ${\rm b}^{\ast}$-coloring is a variation in which a b-vertex is adjacent to a b-vertex in every other color class. We employ the ${\rm b}^{\ast}$-coloring to prove that any $n$-vertex graph $G$ with independence number at most $t$ satisfies ${\rm b}(G) \leq [(t-1)n+t\chi(G)]/(2t-1)$. This bound extends the bounds of Kouider and Zaker (2006) and Alkhateeb and Kohl (2011) and improves the bound in terms of the clique partition number. We show that this bound is sharp for all $t\geq 2$ and $\chi(G)\geq 3$. Furthermore, we provide a refined bound based on the maximum number of vertex-disjoint independent sets of size $t$. Finally, we prove ${\rm b}^{\ast}(G) \leq [(t-2)n+(t-1)\chi(G)]/(2t-3)$ for all $K_{1,t}$-free graphs $G$, a significant improvement over the analogous bound for ${\rm b}(G)$.
\end{abstract}

\noindent {\bf Keywords:} Graph coloring; {\rm b}-chromatic number; ${\rm b}^{\ast}$-chromatic number; chromatic number; independence number

\noindent {\bf Mathematics Subject Classification:} 05C15, 05C35, 05C69

\section{Introduction}

\noindent All graphs in this paper are undirected without any loops and multiple edges. We denote the maximum degree of a graph $G$ by $\Delta(G)$. Complete graph on $n$ vertices is denoted by $K_n$. For each positive integer $k$ we denote $\{1, \ldots, k\}$ by $[k]$. For a subset $S$ of vertices in $G$, by $G[S]$ we mean the subgraph of $G$ induced on the elements of $S$. 
A subset of vertices $S$ in a graph $G$ is called independent if the vertices of $S$ do not induce any edge. The maximum size of an independent set, denoted by $\alpha(G)$ is called the independence number of $G$. The maximum number of mutually adjacent vertices is denoted by $\omega(G)$. A proper vertex coloring of a graph $G$ is an assignment of colors $c:V(G)\rightarrow \mathbb{N}$ such that no two adjacent vertices receive same colors. By a color class we mean a subset of vertices having a same color. The chromatic number $\chi(G)$, is the smallest number of colors used in a proper vertex coloring of $G$. We refer to \cite{BM} for the concepts not defined here. In a proper vertex coloring $c$ of $G$, a vertex $u$ is called b-vertex if $u$ has a neighbor of color $j$ for each color $j\not= c(u)$. A proper coloring $c$ is b-coloring if each color class contains a b-vertex. The maximum number of colors in a b-coloring of $G$ is called b-chromatic number. We denote the b-chromatic number of $G$ by ${\rm b}(G)$. This quantity is also denoted by $\chi_b(G)$ or $\varphi(G)$. Clearly, ${\rm b}(G)\leq \Delta(G)+1$. It is well-known that there exists a polynomial time algorithm which given a graph $G$, the algorithm provides a vertex coloring of $G$ using at most ${\rm b}(G)$ colors. The literature is full of papers concerning the b-coloring of graphs e.g. the survey paper \cite{JP} and \cite{BR,CLS,HK,IM,KTV,KZ,VBK}. Also to determine ${\rm b}(G)$ is $\NP$-complete for complement of bipartite graphs \cite{BSSV}, for bipartite graphs \cite{KTV} and hard to approximate in general \cite{GK}. Computational complexity of the b-chromatic number problem restricted to $H$-free graphs has been recently investigated in \cite{AELMMP}. By a Grundy-coloring (First-Fit coloring) of $G$ we mean a coloring using say $k$ colors such that for each $i,j\in \{1, 2, \ldots, k\}$ with $i<j$, each vertex of color $j$ in $G$ has a neighbor of color $i$. The maximum $k$ satisfying this property is called the Grundy number (or First-Fit chromatic number) of $G$ and denoted by $\Gamma(G)$ also by $\chi_{_{\sf FF}}(G)$ \cite{Z0}. Many researches are devoted to simultaneous or comparative studies of the Grundy and b-chromatic numbers e.g. \cite{HS,MZ1,MZ2,Z2}.

\noindent In a proper vertex coloring $c$ of $G$, a vertex is called nice vertex if for each $j\not= c(u)$, $u$ has a neighbor which is b-vertex of color $j$. A coloring $c$ is called b$^{\ast}$-coloring if there exists a nice vertex in $(G,c)$. We call it ${\rm b}^{\ast}$-coloring because the subgraph induced on b-vertices contains a star graph $K_{1,k-1}$ as subgraph, where the nice vertex is at the center. The ${\rm b}^{\ast}$-chromatic number ${\rm b}^{\ast}(G)$ of $G$ is the maximum number of colors used in a ${\rm b}^{\ast}$-coloring of $G$. It was proved in \cite{Z1,Z2} that there exists an $\mathcal{O}(nm)$ algorithm such that given a graph $G$ on $n$ vertices and $m$ edges, the algorithm provides a ${\rm b}^{\ast}$-coloring in $G$. For a graph $G$, define $G\vee K_1$ as a graph obtained by adding a vertex $v$ to $G$ and joining $v$ to all the vertices of $G$. It was proved in \cite{Z2} that ${\rm b}(G)={\rm b}^{\ast}(G\vee K_1)-1$.


\noindent In this paper we concentrate on graphs with bounded independence number say $t$. To compute $\chi(G)$ for graph $G$ satisfying $\alpha(G)=2$ is a polynomial time problem since it can be transformed to the maximum matching problem and Edmonds blossom algorithm. For the following reason we expect that to determine the chromatic number of graphs $G$ with $\alpha(G)=t\geq 3$ is an $\NP$-complete problem. The problem PARTITION into TRIANGLES asks is it possible to partition the vertex set of a given graph into triangles. Partition into triangles is a famous $\NP$-complete problem \cite{GJ}. It follows that to determine the minimum partition of $G$ into cliques of size at most three is $\NP$-complete. To determine $\chi(G)$ with $\alpha(G)=3$ is a problem very close to the latter problem.

\noindent It was proved in \cite{KM} that ${\rm b}(G)\leq |V(G)|+1-\alpha(G)$. Although this bound is sharp but it is not useful when $n$ is large with respect to $\alpha(G)$. More results in the literature provide upper bounds for ${\rm b}(G)$ in terms of $\alpha(G)$ and $\chi(G)$. The special case is complement of bipartite graphs for which $\alpha=2$. It was proved in \cite{KZ} that ${\rm b}(G)\leq 4\omega(G)/3$. This bound is generalized in \cite{AK} to the class of graphs with independence number 2. Let $G$ be a graph on $n$ vertices with $\alpha(G)=2$. Then ${\rm b}(G) \leq (n+2\chi(G))/3$. Note that the latter bound implies ${\rm b}(G) \leq 4\chi(G)/3$, since $n\leq \alpha(G)\chi(G)=2\chi(G)$ for these graphs. We extend this bound for an arbitrary $t>2$ in Corollary \ref{main-b}. Another bound in \cite{KZ} is in terms of the clique partition number $\theta(G)$. For a graph $G$, denote the minimum number of vertex disjoint cliques which partition $V(G)$ by $\theta(G)$. It was proved in \cite{KZ} that ${\rm b}(G) \leq (t^2/(2t-1)\chi(G)$, whenever $\theta(G)\leq t$. The bound is refined to ${\rm b}(G) \leq [(t-1)n+t\chi(G))]/(2t-1)$ in \cite{AK}. Note that $\alpha(G)\leq \theta(G)$. Graphs such as $K_{1, n-1}$ shows that $\theta(G)-\alpha(G)$ is not bounded by $n^{1-\epsilon}$ for any $\epsilon >0$. The bound of Corollary \ref{main-b} on ${\rm b}(G)$ improves the latter bound involving $\theta(G)$ for graphs satisfying $\alpha(G)\leq t$. For the Grundy number, it was proved in \cite{TWHZ} that $\Gamma(G)\leq (n+\chi(G))/2$. The latter bound implies $\Gamma(G)\leq \chi(G)(\alpha(G)+1)/2$. This bound was also proved in \cite{CKM}. It was proved in \cite{CKM} that the bound for the Grundy number is best possible since there are graphs for which equality holds in the bound. The bound of Theorem \ref{main-b} provides a ratio for ${\rm b}(G)/\chi(G)$ better than $(\alpha(G)+1)/2$, a corresponding bound for $\Gamma(G)/\chi(G)$.

\noindent {\bf The outline of the results:} In Theorem \ref{main} we prove ${\rm b}^{\ast}(G)\leq ((t-1)n+t\chi(G))/(2t-1)$, where $\alpha(G)\leq t$. Corollary \ref{main-b} asserts that the same inequality holds for ${\rm b}(G)$. Proposition \ref{exist} proves that the latter bound is sharp for all $t=\alpha(G)$ and $c=\chi(G)$. These bounds are refined in Propositions \ref{star-bound} and \ref{star-bound-b}. Finally, in Proposition \ref{star-free-b*} a bound for the ${\rm b}^{\ast}$-chromatic number of $K_{1,s}$-free graphs is presented which is much better than a known sharp bound for the b-chromatic number of these graphs.

\section{Bounds in terms of the independence number}

\noindent Let $G$ be a graph with ${\rm b}^{\ast}(G)=b$ and $\chi(G)=\chi$. Let $c$ (resp. $c^{\ast}$) be a proper vertex coloring (resp. ${\rm b}^{\ast}$-coloring) of $G$ using $\chi$ (resp. b) colors. For $i\in \{1, \ldots, \chi\}$ and $j\in \{1, \ldots, b\}$, let $C_i$ (resp. $C^{\ast}_j$) consist of vertices whose color in $c$ (resp. $c^{\ast}$) is $i$ (resp. $j$). Define $C_{ij}=C_i \cap C^{\ast}_j$. By the overlapping array of $c$ and $c^{\ast}$ we mean a $\chi \times b$ array whose $(i,j)$ entry is a set consisting of vertices $u$ such that $c(u)=i$ and $c^{\ast}(u)=j$.

\begin{defin}
We say an overlapping array has echelon form if there exists a nice vertex $u$ and b-vertex neighbors $u_1, \ldots, u_{b-1}$ of $u$ (called as b-neighborhood of $u$) such that $c^{\ast}(u_i)=i$ for each $i\in [b]$ and such that $\{u_1, \ldots, u_{b-1}\}$ can be partitioned as
\begin{center}
$\underbrace{u_1, \ldots, u_{i_1}}_\text{$\in C_{?}$}, \underbrace{u_{i_1+1}, \ldots, u_{i_2}}_\text{$\in C_{?}$}, \ldots, \underbrace{u_{i_{t-1}+1}, \ldots, u_{i_t}}_\text{$\in C_{t-k+\chi-1}$}, \ldots, \underbrace{u_{i_{k-1}+1}, \ldots, u_{i_k}}_\text{$\in C_{\chi -1}$}$,
\end{center}
\noindent where $i_k=b-1$ and $\{u_{i_{t-1}+1}, \ldots, u_{i_t}\} \subseteq C_{t-k+\chi-1}$.
\end{defin}

\begin{figure}[h]
\setlength{\tabcolsep}{6pt}
\begin{center}
\begin{tabular}{|c|c|c|c|c|c|c|c|c|c|c|c|c|}
   \hline
   & $C^{\ast}_1$ &  $C^{\ast}_2$ &  $C^{\ast}_3$ & $C^{\ast}_4$ & $C^{\ast}_5$ & $C^{\ast}_6$ & $C^{\ast}_7$ & $C^{\ast}_8$ & $C^{\ast}_9$ & $C^{\ast}_{10}$ & $C^{\ast}_{11}$ & $C^{\ast}_{12}$\\[0.25eM]
  \hline
   $C_7$ & & &  &  &  &  &  &  & & & & $12$ \\[0.25eM]
  \hline
   $C_6$ & & &  &  &  & & &  & & & $11$ & \\[0.25eM]
  \hline
   $C_5$ & &  & & & & & & & $9$ & $10$ &  &\\[0.25eM]
  \hline
  $C_4$ & & & & & & $6$ & $7$ & $8$ &   &  &  &\\[0.25eM]
  \hline
  $C_3$ & & &  & & $5$ & & & &  &  &    &\\[0.25eM]
  \hline
  $C_2$ & & & $3$ & $4$ &  &  &  &  &  & & &\\[0.25eM]
  \hline
  $C_1$ & $1$ & $2$ & & &  &  &  &  &  & & &\\[0.25eM]
  \hline
  \end{tabular}\\
  \end{center}
\caption{A $7\times 12$ overlapping array of echelon form}\label{echelon}
\end{figure}

\noindent Figure \ref{echelon} depicts an overlapping array of parameters $\chi=7$ and $b=12$. A number say $p$ in an entry $(i,j)$ of the array means that a b-vertex of color $p$ belongs to $C_i\cap C^{\ast}_j$.

\begin{prop}
Let $G$ be a graph, $\chi(G)=k$ and ${\rm b}^{\ast}(G)=b$. Then $G$ admits a $k\times b$ overlapping array of echelon form.
\end{prop}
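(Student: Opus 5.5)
The plan is to start from two arbitrary colorings and obtain the echelon form purely by renaming colors in both of them. Renaming preserves properness, the property of being a ${\rm b}^{\ast}$-coloring, and which vertices are b-vertices and nice vertices.

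Let $c$ be a proper coloring of $G$ with $k=\chi(G)$ colors and $c^{\ast}$ a ${\rm b}^{\ast}$-coloring with $b={\rm b}^{\ast}(G)$ colors. By definition $(G,c^{\ast})$ has a nice vertex $u$. For every color $j\neq c^{\ast}(u)$, choose one neighbor of $u$ that is a b-vertex of color $j$. This gives $b-1$ b-vertices forming the b-neighborhood of $u$.

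\textbf{Step 1 (normalize $c$).} All chosen b-neighbors are adjacent to $u$, so none of them has $c$-color $c(u)$. Permute the colors of $c$ so that $c(u)=\chi$, i.e.\ $u\in C_{\chi}$ (the top row). Let $m$ be the number of $c$-classes containing at least one chosen b-neighbor. Permute the remaining $\chi-1$ colors of $c$ so that these $m$ classes become $C_{\chi-m},\ldots,C_{\chi-1}$, the rows directly below $C_\chi$. Any classes meeting no b-neighbor are pushed to the bottom rows $C_1,\ldots,C_{\chi-m-1}$. This matches the indexing $C_{t-k+\chi-1}$ in the definition, where the $t$-th of the $k$ blocks lies in row $\chi-k+t-1$.

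\textbf{Step 2 (normalize $c^{\ast}$).} Set $c^{\ast}(u)=b$. Order the chosen b-neighbors by their $c$-class in increasing order: first those in $C_{\chi-m}$, then those in $C_{\chi-m+1}$, and so on up to $C_{\chi-1}$, with arbitrary order inside a class. Rename the $c^{\ast}$-colors so that the $i$-th vertex in this list receives color $i$, and call it $u_i$. These colors are distinct because the chosen b-neighbors have pairwise distinct $c^{\ast}$-colors, all different from $c^{\ast}(u)$. So this renaming is a bijection on $[b]$, and $c^{\ast}$ remains a ${\rm b}^{\ast}$-coloring with nice vertex $u$.

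Then $u_1,\ldots,u_{b-1}$ split into consecutive blocks $\{u_{i_{s-1}+1},\ldots,u_{i_s}\}$ for $s=1,\ldots,m$. Block $s$ is contained in $C_{\chi-m+s-1}$, and the last block (ending at $i_m=b-1$) lies in $C_{\chi-1}$. This is exactly the echelon form, with $k$ in the definition playing the role of $m$.

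The only delicate step is checking that the two renamings do not interfere. The first acts on rows and the second on columns of the overlapping array, so each entry $C_{ij}$ is merely moved to a new position and no vertex's b-vertex or nice status changes. Hence neither renaming can destroy the properties used by the other, and no step presents a real obstacle.
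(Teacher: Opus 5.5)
Your proof is correct and uses the same idea as the paper. Both arguments only rename the colors of the $\chi(G)$-coloring $c$ and of the ${\rm b}^{\ast}$-coloring $c^{\ast}$, keeping the nice vertex $u$ in the top row and the last column, and such renaming preserves properness, b-vertices and niceness. The paper builds these permutations iteratively, fixing one row at a time starting from the class of $u_{b-1}$; you define both permutations at once by sorting the b-neighbors by their $c$-class, which is a cleaner route to the same echelon array.
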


\noindent \begin{proof}
Let $c^{\ast}$ be a maximum b$^{\ast}$-coloring of $G$ with $b$ colors and $u$ be a nice vertex in $c^{\ast}$. Let $u_i$ be a neighbor of $u$ such that $u_i$ is b-vertex and $c^{\ast}(u_i)=i$, for each $i\in [b-1]$. Set $B=\{u_1, \ldots, u_{b-1}\}$. Let $C_1^{\ast}, \ldots, C_b^{\ast}$ be the color classes in $c^{\ast}$. We begin with stating a general fact concerning $c^{\ast}$. Let $\sigma: \{1, \ldots, b\} \rightarrow \{1, \ldots, b\}$ be a permutation (i.e. bijection) such that $\sigma(b)=b$. Define a new coloring $\sigma(c^{\ast})$ defined as follows. For each $j\in [b]$, class of vertices of color $j$ in $\sigma(c^{\ast})$ is $C_i^{\ast}$, where $\sigma(i)=j$. Then $\sigma(c^{\ast})$ is a valid b$^{\ast}$-coloring of $G$ with $b$ colors. The reason is that $u$ is yet a nice vertex in $\sigma(c^{\ast})$ of color $b$.

\noindent There exists a $\chi(G)$ coloring $c$ of $G$ such that $c(u)=k$. Let $A(c,c^{\ast})$ be the overlapping array associated to $(c,c^{\ast})$. Note that $c(u_i)\not= k$, for each $i$. We start with the pair $(c,c^{\ast})$. Let $r\in [k-1]$ be such that $c(u_{b-1})=r$. Let $u_{j_1}, \ldots, u_{j_p}$ be other (if any) b-vertices such that $c(u_{j_i})=r$ ($j_i\not= b-1$, for each $i$). Let $\sigma$ be a suitable permutation of $1, \ldots, b-2$ such that in $\sigma(c^{\ast})$, color of each vertex $u_{j_i}$ belong to $\{b-p-1, \ldots, b-2\}$, for each $i$. For simplicity denote $\sigma(c^{\ast})=c_1^{\ast}$. If $r=k-1$ then $u_{b-1}$ is in position $(k-1,b-1)$ of $A(c,c^{\ast})$ and we do nothing concerning $u_{b-1}$. Otherwise, let $c_1$ be a $\chi(G)$-coloring obtained by exchanging colors $k-1$ and $r$ in $c$. We have $c_1(u_{b-1})=k-1$. In either case we have the following property concerning the new pair $(c_1,c_1^{\ast})$. The vertices of $B$ (b-vertex neighbors of $u$) which are in row $k-1$ of the new array $A(c_1,c_1^{\ast})$, are placed in columns $b-p-1, \ldots, b-1$ and only $u_{b-1}$ is in position $(k-1, b-1)$. The argument includes the possible case $p=0$.

\noindent Now in $A(c_1,c_1^{\ast})$ we ignore rows $k, k-1$ and columns $b-p-1, \ldots, b-1, b$. Repeat a same technique in the remaining subarray and obtain a new array $A(c_2,c_2^{\ast})$ satisfying the property that vertices of $B$ which are in row $k-2$ of $A(c_2,c_2^{\ast})$, are placed in columns $b-q, \ldots, b-p-2$, for some $q\geq p+2$. By repeating this scenario we finally obtain a final pair $(c_f,c_f^{\ast})$ such that $A(c_f,c_f^{\ast})$ has the following property. For each $r,s\in \{1, \ldots, b-1\}$ with $r>s$ if $c_f(u_r)=p$ and $c_f(u_s)=q$ then $p\geq q$. The resulting array has echelon form, as desired.
\end{proof}

\noindent Consider a pair $(c,c^{\ast})$ for a graph $G$ such that its corresponding array has echelon form, where $c$ (resp. $c^{\ast}$) is a $\chi(G)$-coloring (resp. b$^{\ast}$-coloring of $G$ with b$^{\ast}(G)$ colors). Let $B$ be a set of b-neighbors for a nice vertex $u$ in $c^{\ast}$. Let the color classes in $c$ be $C_1, \ldots, C_k$, where $k=\chi(G)$. Clearly, $C_k\cap (B\cup \{u\})=\{u\}$. We say the array has a modified form if $0\leq |C_1\cap B| \leq |C_2\cap B| \leq \ldots \leq |C_{\chi(G)-1}\cap B| \leq t$ and for some $\lambda\in \{2, \ldots, t-1\}$, $\big\{|C_i\cap B|: i=1, 2, \ldots, \chi(G)\big\}=\{0,1,\lambda,t\}$. The echelon array in Figure \ref{modif} has modified form.

\begin{figure}[h]
\setlength{\tabcolsep}{6pt}
\begin{center}
\begin{tabular}{|c|c|c|c|c|c|c|c|c|c|c|c|c|}
   \hline
   & $C^{\ast}_1$ &  $C^{\ast}_2$ &  $C^{\ast}_3$ & $C^{\ast}_4$ & $C^{\ast}_5$ & $C^{\ast}_6$ & $C^{\ast}_7$ & $C^{\ast}_8$ & $C^{\ast}_9$ & $C^{\ast}_{10}$ & $C^{\ast}_{11}$ & $C^{\ast}_{12}$\\[0.25eM]
  \hline
   $C_7$ & & &  &  &  &  &  &  & & & & $12$ \\[0.25eM]
  \hline
   $C_6$ & & &  &  &  & & &  & $9$ & $10$ & $11$ & \\[0.25eM]
  \hline
   $C_5$ & &  & & & & $6$ & $7$ & $8$ & & &  &\\[0.25eM]
  \hline
  $C_4$ & & & & $4$ & $5$ & & &  &  &  &  &\\[0.25eM]
  \hline
  $C_3$ & & &  $3$ & & &  &  &  &  & & &\\[0.25eM]
  \hline
  $C_2$ & & $2$ & & &  &  &  &  &  & & &\\[0.25eM]
  \hline
  $C_1$ & $1$ & & & &  &  &  &  &  & & &\\[0.25eM]
  \hline
  \end{tabular}\\
  \end{center}
\caption{Echelon array of Figure \ref{echelon} transformed into a modified echelon form}\label{modif}
\end{figure}

\begin{prop}
Let $G$ be a graph with $\alpha(G)\leq t$. Let $c$ consisting of color classes $C_1, \ldots, C_{\chi(G)}$ (resp. $c^{\ast}$) be a $\chi(G)$-coloring (resp. a b$^{\ast}$-coloring using b$^{\ast}(G)$ colors) of $G$ such that their overlapping array has echelon form. Let $u$ be a nice vertex and $B$ be a b-neighborhood set of $u$ in $c^{\ast}$. Let $g_i=|\{C_j:|C_j\cap B|=i\}$, for $i\in [t]$. Then $|V(G)|\geq |B|+{\sum}_i ig_i$. Moreover, the minimum value for ${\sum}_i ig_i$ is occurred when $\{g_i|1\leq i\leq t\}=\{1, p, t\}$, for some $p\in \{2, \ldots, t-1\}$. That is ${\sum}_i ig_i$ is occurred when the array has modified echelon form.\label{lower}
\end{prop}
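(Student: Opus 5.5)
The plan is to handle the two assertions separately: first a direct counting argument that produces vertices outside $B$, then an exchange argument on the distribution of $B$ among the classes $C_1,\ldots,C_{\chi(G)-1}$. Recall that $B=\{u_1,\ldots,u_{b-1}\}$ contains exactly one vertex of each $c^{\ast}$-colour $1,\ldots,b-1$, and $u\in C_{\chi(G)}$ is the only vertex of $B\cup\{u\}$ in $C_{\chi(G)}$.

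\emph{Counting step.} Fix a class $C_i$ with $m=|C_i\cap B|\geq 2$. Since $C_i$ is independent, $m\leq\alpha(G)\leq t$. Write $C_i\cap B=\{u_{j_1},\ldots,u_{j_m}\}$.

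For distinct $l,s$, the b-vertex $u_{j_l}$ must have a neighbour $w$ with $c^{\ast}(w)=j_s$. This $w$ cannot be $u_{j_s}$, the unique vertex of $B$ with that colour, because $u_{j_s}$ lies in the same independent class $C_i$. Hence $w\notin B$. So for every $s\in[m]$ the class $C^{\ast}_{j_s}$ contains a vertex outside $B$, and this yields $m$ vertices outside $B$ attached to $C_i$.

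These vertices have pairwise distinct $c^{\ast}$-colours. The colour sets $\{j_1,\ldots,j_m\}$ of different classes $C_i$ are disjoint, because the $u_j$ have distinct colours. Therefore the vertices obtained from different classes are all distinct, and summing gives
\[|V(G)|\geq |B|+\sum_{i\geq 2} i\,g_i .\]

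Classes with $|C_i\cap B|=1$ do not force such extra vertices by this argument. I would account for the $g_1$ term through the nice vertex $u\notin B$ together with the colour-$b$ neighbours that the vertices of $B$ need. The point to check carefully is exactly how the paper intends the $i=1$ term. If necessary, I would state the inequality with the sum over $i\geq 2$, which is what is used afterwards.

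\emph{Optimization step.} Treat $|B|=b-1$ and the number $\chi(G)-1$ of available classes as fixed. The quantities $m_i=|C_i\cap B|\in\{0,\ldots,t\}$ must satisfy $\sum_i m_i=b-1$, and we want to minimize $E=\sum_{m_i\geq 2} m_i$.

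Suppose two classes satisfy $2\leq m_i\leq m_{i'}<t$. Move one b-vertex from the first class to the second, by recolouring in the array sense: we only redistribute counts, and the bound depends on counts alone. If $m_i>2$, then $E$ is unchanged. If $m_i=2$, then $E$ drops by $1$, since that class now counts $1$ instead of $2$ and the other class gains $1$.

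Iterating this move, we can assume at most one class has size strictly between $1$ and $t$. Classes of size $1$ and $0$ are harmless. Sorting the classes increasingly, which is allowed by the echelon form of the earlier Proposition via permutation of the rows, we reach the profile $\{0,1,\lambda,t\}$ with $\lambda\in\{2,\ldots,t-1\}$. This is the modified echelon form, and it attains the minimum of $\sum_i i g_i$.

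\emph{Main obstacle.} The hardest part is justifying that the exchange is legitimate. The bound must depend only on the size profile $(m_i)$, and any profile with $m_i\leq t$ summing to $b-1$ over $\chi(G)-1$ classes must be admissible. Conversely, no move should ever leave the feasible region, for instance by exceeding $t$ or needing more than $\chi(G)-1$ classes. I would settle this by phrasing the second part purely as an integer minimization over such profiles, and then checking monotonicity of $E$ under the moves described.
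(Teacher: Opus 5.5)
Your proposal follows the paper's proof in both parts. The counting step is the same: two b-vertices in one class $C_i$ are non-adjacent, so each forces a vertex of the other's $c^{\ast}$-colour outside $B$, and distinctness follows from the distinct colours. The exchange on the profile $(|C_i\cap B|)_i$ is also the same. The paper's Case~2 moves b-vertices from a smaller non-singleton class into a larger non-full one, its Case~1 re-sorts rows, and its potential $\delta$ ensures termination. You do need to supply a termination argument for your one-vertex moves: $\sum_i m_i^2$ strictly increases under each move, so it does the job. Your ``main obstacle'' is not really one. For a lower bound on $|V(G)|$ you only need the actual profile's value to be at least the minimum of the relaxed integer problem, so realizability of intermediate profiles is never needed.

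Where you should be decisive is the $i=1$ term. Do not try to account for $g_1$ through $u$ and colour-${\rm b}^{\ast}(G)$ neighbours. The vertex $u$ is adjacent to all of $B$ and carries colour ${\rm b}^{\ast}(G)$, so it alone meets every such requirement and forces no further vertices. In fact the inequality with $i=1$ included is false. In $K_n$ with $n\geq 3$ the configuration is in echelon form with $|B|=g_1=n-1$ and $g_i=0$ for $i\ge 2$, so the inequality would read $n\geq 2n-2$.

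The paper's proof starts with ``Fix an $i\in\{1,\ldots,t\}$'' and silently commits this error, since its counting needs two b-vertices in the class. Its second part likewise only makes sense for $\Psi=\sum_{i\ge 2} i g_i$: with $i=1$ included, $\Psi=|B|$ is constant, and ``$g_i^{new}=1\Rightarrow\Psi^{new}<\Psi$'' is meaningless. The proof of Theorem~\ref{main} also only uses the count for $i\geq 2$.

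So state and prove the corrected version. Your argument gives it with one extra vertex for free, because the forced neighbours have colours in $[{\rm b}^{\ast}(G)-1]$ and so none of them is $u$:
\[
|V(G)|\;\geq\; |B|+1+\sum_{i\geq 2} i\,g_i .
\]
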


\noindent \begin{proof}
Fix an $i\in \{1, \ldots, t\}$. Let $C$ be a class in $c$ such that $|B\cap C|=i$. Assume that $B\cap C=\{u_{k_1}, \ldots, u_{k_i}\}$. Then for each two b-vertices $u_{k_r}$ and $u_{k_s}$, $1\leq r\not= s \leq i$, $u_{k_r}$ needs a neighbor in $c^{\ast}$ of color $s$ and vice versa. Since  $u_{k_1}, \ldots, u_{k_i}$ are independent then these neighbors are distinct vertices from $B$. It follows that $n\geq |B|+{\sum}_i ig_i$.

\noindent We prove the second part. Denote $\Psi(g_1, \ldots, g_t)={\sum}_i ig_i$ and $\delta(g_1, \ldots, g_t)={\sum}_{i=1}^{\chi(G)-1} g_i(\chi(G)-i)$. Let $i$ be a smallest index with $g_i\geq 2$. There are two possibilities.

\noindent Case 1. $g_{i+1}=1$.

\noindent In this case we switch two classes $C_i$ and $C_{i+1}$. By this switch the b$^{\ast}$-coloring is not changed but for the new parameters $g^{new}_i$ and $g^{new}_{i+1}$ we have $g^{new}_i=1$ and $g^{new}_{i+1}\geq 2$. Then we continue with $g^{new}_{i+1}$.

\noindent Case 2. $2\leq g_{i+1} \leq t-1$.

\noindent In this case we move $\max \{g_i-1,t-g_{i+1}\}$ vertices from $C_i \cap B$ to $C_{i+1} \cap B$. If $g^{new}_i=1$ then obviously $\Psi^{new}< \Psi$. But if $g^{new}_i\geq 2$ then $\Psi^{new}= \Psi$. But $\delta(g_1^{new}, \ldots, g_{\chi(G)-1}^{new})<\delta(g_1, \ldots, g_{\chi(G)-1})$.

\noindent By continuing the above procedure we eventually obtain a distribution of vertices in $B$ such as  $g^f_1, \ldots, g^f_t$ such that the corresponding quantities $\Psi^f$ and $\delta^f$ cannot be decreased. It follows that there exists no $i$ such that $g^f_i\geq 2$ and $g^f_{i+1} \leq t-1$. This only happens when there exists $p\in \{2, \ldots, t-1\}$ such that for each $i$, $g^f_i\in \{1, p, t\}$.
\end{proof}

\begin{thm}
For any graph $G$ on $n$ vertices, $$(2t-1){\rm b}^{\ast}(G)\leq (t-1)n+t\chi(G).$$\label{main}
\end{thm}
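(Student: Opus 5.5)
Throughout, $t$ is understood with $\alpha(G)\leq t$, as in Proposition \ref{lower}. Set $b={\rm b}^{\ast}(G)$ and $k=\chi(G)$. The plan is to apply Proposition \ref{lower}, so I first fix a $\chi(G)$-coloring $c$ and a maximum b$^{\ast}$-coloring $c^{\ast}$ whose overlapping array has echelon form; such a pair exists by the preceding proposition. Let $u$ be the nice vertex and $B=\{u_1,\ldots,u_{b-1}\}$ its b-neighborhood. Then $u$ lies alone in $C_k\cap(B\cup\{u\})$, and $B$ is distributed among $C_1,\ldots,C_{k-1}$. For $1\leq i\leq t$ let $m_i=|C_i\cap B|$. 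Each $C_i\cap B$ is an independent set, so $m_i\leq t$.

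\textbf{Counting vertices.} The counting argument in the proof of Proposition \ref{lower} gives the following. If a class has $m_i\geq 2$, then each of its $m_i$ b-vertices needs, for each of the other $m_i-1$ colors represented in that class, a neighbor of that color. These neighbors lie outside $B$, and distinct b-vertices can be charged distinct witnesses. I will make this precise per class: the b-vertex of $C_i\cap B$ with color $j$ needs a neighbor of color $j$ outside $C_i$, and this neighbor cannot be in $B$ if it must have color $j$, since $u_j$ itself is nonadjacent to it. So
\[
n\geq 1+(b-1)+\sum_{i:\, m_i\geq 2} m_i = b+\sum_{i:\, m_i\geq 2} m_i .
\]
I expect the second part of Proposition \ref{lower} to guarantee that, in the worst case, all nonempty classes except possibly one have size $1$ or $t$.

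\textbf{Optimizing.} The optimization is cleanest in terms of the total $\sum_i m_i$, which equals $b-1$. Let $x$ be the number of classes with $m_i=1$ and let $y$ be the number with $m_i\geq 2$. Then $x+y\leq k-1$, and
\[
\sum_{m_i\geq 2} m_i = b-1-x \geq b-1-(k-1-y).
\]
Since each $m_i\leq t$, we also have $b-1-x\leq ty$. We want to show $n\geq b+\frac{t-1}{t}(b-k)$, up to rounding; rearranged, this is $(2t-1)b\leq (t-1)n+tk$.

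Set $S=b-1-x$, the number of b-vertices lying in classes of size at least $2$. Then $y\geq S/t$, so $x\leq k-1-S/t$. This gives $S=b-1-x\geq b-k+S/t$, hence $S\geq \frac{t}{t-1}(b-k)$. Therefore
\[
n\geq b+S\geq b+\tfrac{t}{t-1}(b-k),
\]
which after multiplying through yields $(t-1)n\geq (2t-1)b-tk$. This is exactly the claim.

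\textbf{Main obstacle.} The delicate step is the counting claim: for a class with $m_i\geq 2$, the $m_i$ required neighbors of the relevant colors must be genuinely distinct vertices outside $B$, giving one extra vertex per b-vertex in the class. The proof of Proposition \ref{lower} asserts this ("these neighbors are distinct vertices from $B$"). I would justify it by noting two facts. First, the b-vertex of color $j$ in class $C_i$ needs a neighbor of color $s$, and $u_s$ is in the same independent class, so that neighbor is not $u_s$. Second, I would charge, for each $u_j\in C_i\cap B$, one neighbor of color $s(j)$, where $s$ is a cyclic shift of the colors in the class; this makes the charged vertices distinct because they have distinct colors. I would also confirm that the constant $1$ from $u$ and the rounding do not break the final inequality.
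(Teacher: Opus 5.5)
Your proof is correct, and it takes a more direct route than the paper's.

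\textbf{The paper's route.} The paper first runs the exchange procedure from the second part of Proposition~\ref{lower}. This reduces to a modified echelon array in which every $|C_i\cap B|$ lies in $\{0,1,p,t\}$. It then writes ${\rm b}^{\ast}=tg+p+k$, $\chi=1+g+k+q$, $n\geq 2tg+2p+k+q+\zeta$, and expands to get slack $(t-1)\zeta+t-p\geq 0$.

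\textbf{Your route.} You skip that structural reduction. From $\sum m_i=b-1$, $m_i\leq t$ and $x+y\leq\chi(G)-1$, a one-line linear estimate gives $(t-1)S\geq t(b-\chi(G))$. This is exact, so your ``up to rounding'' caveat is unnecessary. Several points in your favor:
\begin{itemize}
\item Your argument needs neither the echelon form nor the second part of Proposition~\ref{lower}. It only uses that the $c$-class of $u$ misses $B$, which holds for every $\chi(G)$-coloring because $B\subseteq N(u)$.
\item Classes disjoint from $B$ are absorbed automatically by $x+y\leq\chi(G)-1$, instead of by the paper's ``$q$ only appears positively'' step.
\item Your cyclic-shift charging proves the distinctness of the extra vertices rigorously; the paper only asserts it. Add one observation to make it airtight across classes: the color sets $c^{\ast}(C_i\cap B)$ are pairwise disjoint for different $i$, since each color of $[b-1]$ occurs exactly once in $B$.
\end{itemize}

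\textbf{What the paper's route buys.} It gives an explicit extremal distribution of $B$ among the classes. That exhibits the exact slack $t-p$, and the same structure is reused in Proposition~\ref{star-bound} and in the construction of Proposition~\ref{exist}.

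\textbf{Small slips to fix.}
\begin{itemize}
\item $m_i$ should be indexed over $1\leq i\leq\chi(G)-1$, not $1\leq i\leq t$.
\item The sentence saying the b-vertex of color $j$ ``needs a neighbor of color $j$'' should say a neighbor of color $s\neq j$, where $u_s$ is another b-vertex in the same class. You state this correctly later.
\item Dividing by $t-1$ assumes $t\geq 2$. The case $t=1$ is trivial, since then $S=0$ and $b\leq\chi(G)$.
\end{itemize}
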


\noindent \begin{proof}
Let $c$ (resp. $c^{\ast}$) be a $\chi(G)$-coloring (resp. a b$^{\ast}$-coloring using b$^{\ast}(G)$ colors) of $G$ such that their overlapping array has echelon form. Let $B$ be a b-neighborhood set in $c^{\ast}$ and suppose that there are $g'_i$ classes $C$ in $c$ such that $|C\cap B|=i$, for each $i\in \{1, \ldots, t\}$. By Proposition \ref{lower}, $n\geq |B|+\Psi(g'_1, \ldots, g'_t)\geq |B|+\Psi(g_1, \ldots, g_t)$, where for each $i$, $g_i\in \{1, p, t\}$, for some $p$. Suppose that the multiplicity of $t$ (resp. $1$) is $g_t$ (resp. $k$). Suppose also that $q$ first classes in $c$ does not intersect $B$. It follows that b$^{\ast}(G)=tg+p+k$ and $\chi(G)=1+g+k+q$. Fix $i\geq 2$, and consider the b-vertices in $C_i\cap B$. Let $u_r$ and $u_s$ be arbitrary b-vertices in $C_i$. Then $u_r$ needs a neighbor of color $s$ and vice versa. This introduces $|C_i\cap B|$ extra vertices (denoted as $\nu$) for the graph, since $u_r$ and $u_s$ are independent. It follows that 
$$n\geq tg+p+k+q+\nu \geq tg+p+k+q+tg+p\geq 2tg+2p+k+q.$$
\noindent Let $\zeta\geq 0$  be such that $n=2tg+2p+k+q+\zeta$. Note that $q$ only appears positively in the right hand side (RHS) of $(2t-1){\rm b}^{\ast}(G)\leq (t-1)n+t\chi(G)$. So it's enough to prove the bound for the case $q=0$.
\noindent The inequality $(2t-1){\rm b}^{\ast}\leq (t-1)n+t\chi(G)$ is equivalent to
$$(2t-1)(tg+p+k)\leq (t-1)(2tg+2p+k+\zeta)+t(1+g+k).$$
\noindent For LHS and RHS of the inequality we have
$$LHS=2t^2g+2tp+2tk-tg-p-k,$$
$$RHS=2t^2g+2tp+2tk+t\zeta-tg-2p-k-\zeta+t.$$
\noindent It follows that $RHS-LHS=(t-1)\zeta+t-p\geq 0$. This completes the proof. As a bypass result we obtain that if quality holds in the bound then $q=\zeta=0$.
\end{proof}

\begin{cor}
For any graph $G$ on $n$ vertices, $$(2t-1){\rm b}(G)\leq (t-1)n+t\chi(G).$$\label{main-b}
\end{cor}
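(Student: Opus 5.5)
The plan is to reduce Corollary \ref{main-b} to Theorem \ref{main} via the identity ${\rm b}(G)={\rm b}^{\ast}(G\vee K_1)-1$ from \cite{Z2}, quoted in the introduction. Here $G$ is understood to satisfy $\alpha(G)\leq t$, as in Theorem \ref{main}. Put $H=G\vee K_1$, where the new vertex $v$ is joined to every vertex of $G$.

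First we record the parameters of $H$. It has $n+1$ vertices. Since $v$ is universal, $\chi(H)=\chi(G)+1$. Also $\alpha(H)=\max\{\alpha(G),1\}=\alpha(G)\leq t$, because $v$ lies in no independent set of size two. So $H$ satisfies the hypothesis of Theorem \ref{main}.

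Applying Theorem \ref{main} to $H$ gives
$$(2t-1)\big({\rm b}(G)+1\big)=(2t-1){\rm b}^{\ast}(H)\leq (t-1)(n+1)+t(\chi(G)+1).$$
The right-hand side equals $(t-1)n+t\chi(G)+2t-1$. Subtracting $2t-1$ from both sides yields exactly $(2t-1){\rm b}(G)\leq (t-1)n+t\chi(G)$.

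The only point needing care is the identity ${\rm b}(G)={\rm b}^{\ast}(G\vee K_1)-1$, and the proof only uses the inequality ${\rm b}^{\ast}(G\vee K_1)\geq {\rm b}(G)+1$. To justify it directly, take a b-coloring of $G$ with ${\rm b}(G)$ colors and give $v$ a new color. Each b-vertex of $G$ is still a b-vertex, since it is adjacent to $v$. The vertex $v$ is nice, because it is adjacent to a b-vertex of every other color. This gives a b$^{\ast}$-coloring of $H$ with ${\rm b}(G)+1$ colors, and nothing else is required.
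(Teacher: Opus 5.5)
Your proposal is correct and is essentially the paper's proof: apply Theorem \ref{main} to $G\vee K_1$, which has $n+1$ vertices and chromatic number $\chi(G)+1$, and combine it with ${\rm b}(G)={\rm b}^{\ast}(G\vee K_1)-1$. You also check $\alpha(G\vee K_1)\leq t$ and prove ${\rm b}^{\ast}(G\vee K_1)\geq {\rm b}(G)+1$ directly; the paper leaves both implicit, and both verifications are sound.
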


\noindent \begin{proof}
We have ${\rm b}(G)={\rm b}^{\ast}(G\vee K_1)-1$. Then $(2t-1){\rm b}(G)=(2t-1)\big({\rm b}^{\ast}(G\vee K_1)-1\big) \leq (t-1)(n+1)+t(\chi(G)+1)-(2t-1)\leq (t-1)n+t\chi(G)$.
\end{proof}

\noindent The bound of Corollary \ref{main-b} is sharp for all $t\geq 2$ and $\chi\geq 3$.

\begin{prop}
Let $t\geq 2$ and $c\geq 3$ be any integers. Then there exists a graph $G$ such that $\alpha(G)=t$, $\chi(G)=c$ and ${\rm b}(G)=\big[(t-1)|V(G)|+t\chi(G)\big]/(2t-1)$.\label{exist}
\end{prop}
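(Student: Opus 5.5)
The plan is to build an explicit graph that realizes the equality case found in the proof of Theorem~\ref{main}. The counting there suggests the following shape. The $\chi$-coloring has one class $X$ of $t$ b-vertices, which are pairwise independent. The remaining classes each contain a single b-vertex. For every pair $x_r,x_s\in X$ there is a private extra vertex supplying the missing color. With $g=1$ such group and $k$ singleton b-vertices we get $n=2t+k$, $\chi=1+k$ and ${\rm b}=t+k$. Indeed $(t-1)(2t+k)+t(1+k)=(2t-1)(t+k)$, so equality would hold. Given $t\geq 2$ and $c\geq 3$, I set $k=c-1\geq 2$.

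\emph{Construction.} Take vertices $X=\{x_1,\dots,x_t\}$, $Y=\{y_1,\dots,y_t\}$ and $Z=\{z_1,\dots,z_k\}$. Make $X$ independent, $Y$ a clique and $Z$ a clique. Join $x_r$ to $y_s$ exactly when $r\neq s$, and join every $z_j$ to all of $X$. Partition $Y$ into $k$ nonempty parts $Y_1,\dots,Y_k$, each of size at most $t-1$; this is possible precisely because $k\geq 2$. Join $z_j$ to every vertex of $Y\setminus Y_j$ and to none of $Y_j$. Then $n=2t+k$.

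\emph{Checks.} First, $\alpha(G)=t$. The set $X$ is independent. An independent set contains at most one vertex of $Y$ and at most one of $Z$. If it meets $X$, it contains no vertex of $Z$. If it contains some $y_s$, it can meet $X$ only in $x_s$, giving size at most $2$. The remaining candidates are $\{z_j,y\}$ with $y\in Y_j$, of size $2$. So $\alpha(G)=t$.

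Next, the coloring $x_s,y_s\mapsto s$ and $z_j\mapsto t+j$ is proper, since $x_s\not\sim y_s$ and $Y$, $Z$ are cliques. It is a b-coloring with $t+k$ colors. The vertex $x_r$ sees color $s\neq r$ via $y_s$ and sees all $Z$-colors. The vertex $z_j$ sees colors $1,\dots,t$ via $X$ and the other $Z$-colors via $Z$. Hence ${\rm b}(G)\geq t+k$.

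For the chromatic number, the classes $X$ and $Y_j\cup\{z_j\}$ ($j\in[k]$) give a proper coloring with $k+1=c$ colors, so $\chi(G)\leq c$.

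\emph{Closing the argument.} I would get both remaining inequalities from Corollary~\ref{main-b}, avoiding a direct lower bound on $\chi$. Corollary~\ref{main-b} gives $(2t-1)(t+k)\leq(2t-1){\rm b}(G)\leq (t-1)(2t+k)+t\chi(G)$. Simplifying, this yields $t(k+1)\leq t\chi(G)$, so $\chi(G)\geq k+1=c$, and hence $\chi(G)=c$. Substituting $\chi(G)=c$ back, the same inequality chain forces ${\rm b}(G)=t+k=[(t-1)n+tc]/(2t-1)$.

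The main obstacle is the independence-number check, in particular ensuring that each $z_j$ together with its nonneighbors in $Y$ stays within size $t$. This is exactly what the restriction $|Y_j|\leq t-1$ handles, and it is what forces $c\geq 3$.
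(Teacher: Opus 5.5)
\textbf{The gap: the upper bound $\chi(G)\le c$ fails.} You make $Y$ a clique, and you use this in the $\alpha$ check. But you then take $Y_j\cup\{z_j\}$ as a color class, and that set is independent only when $|Y_j|\le 1$.

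Splitting the $t$ vertices of $Y$ into $k$ nonempty parts of size at most $1$ forces $k=t$. So, as written, your argument covers only $c=t+1$. The other two ranges fail:
\begin{itemize}
\item For $c\ge t+2$ the prescribed partition into $k>t$ nonempty parts does not exist.
\item For $3\le c\le t$ your graph actually has $\chi(G)>c$. If $t\ge k+2$, then $Y$ is already a clique of size $t>c$. If $t=k+1$, some part is a singleton $Y_j=\{y_r\}$, and then $\{x_r\}\cup(Y\setminus\{y_r\})\cup\{z_j\}$ is a clique of size $t+1=c+1$.
\end{itemize}
Concretely, take $t=c=3$ with $Y_1=\{y_1\}$ and $Y_2=\{y_2,y_3\}$. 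The set $\{x_1,y_2,y_3,z_1\}$ induces $K_4$, so $\chi(G)=4$, $n=8$ and ${\rm b}(G)=5$. Here $[(t-1)n+t\chi(G)]/(2t-1)=28/5\neq 5$.

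Your closing remark also signals the inconsistency. With $Y$ a clique, an independent set meets $Y_j$ in at most one vertex, so the restriction $|Y_j|\le t-1$ plays no role in the $\alpha$ check.

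\textbf{How to repair it.} The skeleton is sound and the repair is small. Replace ``$Y$ a clique'' by ``$G[Y]$ is complete $k$-partite with independent parts $Y_1,\ldots,Y_k$''. Require $|Y_j|\le t-1$ and allow empty parts, e.g.\ $Y_1=\{y_1\}$, $Y_2=Y\setminus\{y_1\}$, and $Y_j=\emptyset$ for $j\ge 3$. Then:
\begin{itemize}
\item $G[Y\cup Z]$ is complete $k$-partite with parts $Y_j\cup\{z_j\}$, each of size at most $t$.
\item These parts together with $X$ form a proper $(k+1)$-coloring, so $\chi(G)\le c$.
\item An independent set avoiding $X$ lies in one part, so it has size at most $t$.
\item An independent set containing some $x_s$ avoids $Z$ and every $y_{s'}$ with $s'\neq s$. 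If it contains $y_s$, it contains no other vertex of $X$. So it lies in $X$ or in $\{x_s,y_s\}$.
\end{itemize}
Hence $\alpha(G)=t$. Your b-coloring is unaffected, and your closing argument via Corollary~\ref{main-b} then gives exact equality for all $t\ge 2$ and $c\ge 3$, on $2t+c-1$ vertices.

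This is more explicit than the paper's echelon-array construction. That construction is only sketched, and its final display establishes the equality only up to a floor: its Figure~\ref{full} example gives $[(t-1)n+tc]/(2t-1)=61/5$.
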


\noindent \begin{proof}
We construct a graph $G$ such that $\chi(G)=c$ and ${\rm b}^{\ast}(G)=\lfloor \big(t^2/(2t-1)\big)\chi(G) \rfloor$. Based on the proof of Proposition \ref{lower}, we need to find $g$, $p$ and $k$ such that $\chi(G)=c=k+g+1$ (or $c=k+g$, for the case $p=0$) and ${\rm b}^{\ast}(G)=gt+p+k$ (or ${\rm b}^{\ast}(G)=gt+k$, for the case $p=0$). Write for simplicity  $\beta=\lfloor \big(t^2/(2t-1)\big) c \rfloor$. In order to have enough room for placing necessary and sufficient neighbors of the b-vertices, the number of available vertices i.e. $k(t-1)+t-p$ (or $k(t-1)$, if $p=0$) should be at least the number of required vertices i.e. $tg+p$ ($tg$, if $p=0$). Then the existence problem bifurcates as follows.

\noindent Case $p=0$: In this case we should have $k(t-1)\geq tg$ and also $c=k+g$ and $\beta=gt+k$. It follows that the problem reduces to find $k$ such that $1\leq k \leq c$ and

\begin{equation}
\begin{cases}
\beta=tc-k(t-1)\quad\hspace{3cm} \\
			
tc\leq k(2t-1) \quad\hspace{1.75cm}.
\end{cases}
\end{equation}

\noindent Case $p\geq 1$: In this case we should have $k(t-1)+t-p\geq tg+p$ or $k(t-1)\geq t(g-1)+2p-t$ and also $c=k+g+1$ and $\beta=gt+p+k$. It follows that the problem reduces to find $k$ and $p$ such that $1\leq k \leq c$, $2\leq p \leq t-2$ and

\begin{equation}
\begin{cases}
\beta = t(c - 1 - k) + k + p \quad\hspace{3cm} \\
			
2p+t(c-1) \leq k(2t-1)+t \quad\hspace{1.75cm}.
\end{cases}
\end{equation}

\noindent Note that $tc-\beta\geq t-1$. By dividing $tc-\beta$ to $t-1$, let $k$ be a positive integer such that $tc-\beta=k(t-1)+r$, for some $0\leq r \leq t-2$.

\noindent Case 1: $r = 0$

\noindent This case corresponds to $p=0$. In this case the equality $tc-k(t-1)=\beta$ in ${\bf (1)}$ holds. We prove $tc \geq k(2t-1)$. For this purpose note that $\beta(2t-1)\leq ct^2$. Then $(tc-k(t-1))(2t-1)\leq ct^2$ and $2t^2c-tc-k(t-1)(2t-1)\leq ct^2$. The latter inequality implies $t^2c-tc\leq k(t-1)(2t-1)$ and $tc(t-1)\leq k(t-1)(2t-1)$. Now, since $t-1\geq 1$ then $tc\geq k(2t-1)$, as desired. The conditions in ${\bf (1)}$ hold.

\noindent We show that $k$ is a valid solution by proving that $1\leq k \leq c$. Note that $tc\geq 6$ then $k(2t-1)\geq 6$. Hence, $k\geq 1$. To prove $k\leq c$ we start from $ct^2< (\beta+1)(2t-1)$ or equivalently $ct^2< 2t^2c-tc-k(t-1)(2t-1)+2t-1$. It yields $k(2t-1)<tc+2+(1/(t-1))$. If we assume $k\geq c+1$, then $(c+1)(2t-1)\leq tc+2+(1/(t-1))$. It simplifies to $c(t-1)+2t-3\leq 1/(t-1$. The left side is at least $4$, a contradiction.

\noindent \noindent Case 2: $r \geq 2$

\noindent In this case define $p = t - r$. Since $1 \leq r \leq t-2$, then $2\leq p \leq t-1$, which satisfies the required bounds for $p$. For $p$ we should have $\beta = t(c - 1 - k) + k + p$. To prove, substitute $r=t-p$ into the division equation $tc-\beta=k(t-1)+r$ and obtain $tc-\beta=k(t-1)+t-p$. It follows that $\beta=(c - 1 - k) + k + p$, as desired.

\noindent Now we prove the inequality $k(2t-1)+t\geq 2p+t(c-1)$ in ${\bf (2)}$. Substitute $\beta=t(c-1)-k(t-1)$ into the lower bound $\beta(2t-1)\leq ct^2$ and obtain $[tc-t-k(t-1)+p](2t-1)\leq ct^2$. It implies the following inequalities
$$t^2c-tc-2t^2+t+2pt-p\leq k(t-1)(2t-1)$$
$$\Rightarrow tc(t-1)-2t(t-1)-t+2p(t-1)+p\leq k(t-1)(2t-1)$$
$$\Rightarrow (tc-2t+2p)(t-1)+p-t\leq k(t-1)(2t-1)$$
$$\Rightarrow tc-2t+2p+(p-t)/(t-1)\leq k(2t-1).$$
\noindent Note that $-1<(p-t)/(t-1)<0$ and both $tc-2t+2p$ and $k(2t-1)$ are integers. Then $tc-2t+2p\leq k(2t-1)$ and $k(2t-1)+t\geq 2p+t(c-1)$, as desired.

\noindent A proof similar to the one in case $r=0$ proves $k\leq  c$. We omit the details.


\begin{figure}[h]
\setlength{\tabcolsep}{6pt}
\begin{center}
\begin{tabular}{|c|c|c|c|c|c|c|c|c|c|c|c|c|}
   \hline
   & $C^{\ast}_1$ &  $C^{\ast}_2$ &  $C^{\ast}_3$ & $C^{\ast}_4$ & $C^{\ast}_5$ & $C^{\ast}_6$ & $C^{\ast}_7$ & $C^{\ast}_8$ & $C^{\ast}_9$ & $C^{\ast}_{10}$ & $C^{\ast}_{11}$ & $C^{\ast}_{12}$\\[0.25eM]
  \hline
   $C_7$ & & &  &  &  & $6$ &  &  & $9$ & & & \textcolor{red}{$12$} \\[0.25eM]
  \hline
   $C_6$ & & &  &  &  & & &  & \textcolor{red}{$9$} & \textcolor{red}{$10$} & \textcolor{red}{$11$} & \\[0.25eM]
  \hline
   $C_5$ & &  & & & & \textcolor{red}{$6$} & \textcolor{red}{$7$} & \textcolor{red}{$8$} & & &  &\\[0.25eM]
  \hline
  $C_4$ & & & & \textcolor{red}{$4$} & \textcolor{red}{$5$} & & &  &  &  &  &\\[0.25eM]
  \hline
  $C_3$ & & &  \textcolor{red}{$3$} & & &  &  & & & $10$ & $11$ &\\[0.25eM]
  \hline
  $C_2$ & & \textcolor{red}{$2$} & & & & & $7$ & $8$ &   & & &\\[0.25eM]
  \hline
  $C_1$ & \textcolor{red}{$1$} & & & $4$ & $5$ &  &  &  &  & & &\\[0.25eM]
  \hline
  \end{tabular}\\
  \end{center}
\caption{A graph realizing Proposition \ref{exist} with $\alpha=3$, $\chi=7$, ${\rm b}^{\ast}=12$ for which equality holds in the bound of Theorem \ref{main} (with parameters $g=p=2$ and $k=4$)}\label{full}
\end{figure}

\noindent In both cases, we proved the existence of $g$, $k$ and $p$ satisfying the conditions ${\bf (1)}$ and ${\bf (2)}$. The rest of the construction is as follows. We hold a $c \times \beta$ array and put a vertex $u$ in position $(c,\beta)$. Vertex $u$ will be a nice vertex of color $\beta$. We first specify position of $gt$ b-vertices of colors $gt+k, gt+k-1, \ldots, k+1$. Place vertices of colors $\beta-jt, \beta-jt+1, \ldots, \beta-(j-1)t-1$ in row $\beta-j$ of the array for each $j\in \{1, \ldots, g\}$, in such a way that the array has a modified echelon form. We should specify position of $k$ many b-vertices. One of them is the nice vertex $u$ placed in the position $(c,\beta)$. For each $i\in \{1, \ldots, k-1\}$, place a vertex of color $i$ in position $(i,i)$ of the table. In case that $p=0$ we need no more vertices. But if $p>1$ we introduce the remaining $p=\beta-gt-k$ vertices of colors $k, k+1, \ldots, k+p-1$ as b-vertices all in the row corresponding to $C_k$. Since the inequality $k(t-1)+t-p\geq tg+p$ hold then we have enough room to add extra vertices to the array and add necessary edges between the new and old vertices to make the previous vertices b-vertex. We obtain a clique of size $k$. See Figure \ref{full} in which red entries are b-vertices and the related parameters are $t=3$, $c=7$, $n=20$, $g=2$, $k=4$, $p=2$ and $\beta=12$. Call the resulting graph $G$ in which vertex $u$ is nice in the underlying ${\rm b}^{\ast}$-coloring. We have $|V(G)|=2gt+p+k$ and $\chi(G)=\omega(G)=c$. The required equality for ${\rm b}(G)$ is obtained as follows $${\rm b}^{\ast}(G) = \lfloor \big[(t-1)|V(G)|+t\chi(G)\big]/(2t-1) \rfloor \leq {\rm b}(G)\leq \lfloor \big[(t-1)|V(G)|+t\chi(G)\big]/(2t-1) \rfloor.$$
\end{proof}

\section{More bounds involving independent sets}

\noindent Let $G$ be a graph and $u$ a vertex of $G$. We say $u$ has $t$-star-degree at least $d$ if there exists a subset $D\subseteq N(u)$ such that $|D|=dt$ and $G[D]$ can be partitioned into $d$ vertex disjoint independent subsets each of size $i$. Define the $t$-star-degree of $u$ as the maximum $d$ satisfying the latter properties and denote it as $d_t^{\sf star}(u)$. Define the maximum $t$-star-degree of $G$ as ${\Delta}_t^{\sf star}(G)={\max}_{u} d_t^{\sf star}(u)$. Refining the bound of Theorem \ref{main}, the following bound is in terms of $|V(G)|$, $\chi(G)$, $\alpha(G)$, ${\rm b}^{\ast}(G)$ and the new quantity ${\Delta}_t^{\sf star}(G)$. It follows that for graphs with limited ${\Delta}_t^{\sf star}(G)$, the new upper bound improves significantly the bound of Theorem \ref{main}. An example is family of graphs with $\alpha(G)=t$ but ${\Delta}_t^{\sf star}(G)\leq 1$. The question whether ${\Delta}_t^{\sf star}(G)=k$ ($k$ fixed) can be decided in time $\mathcal{O}(n^{tk})$ as argued later.

\begin{prop}
Let $G$ be a graph on $n$ vertices and ${\Delta}_t^{\sf star}(G)=g$, where $t\geq 3$. Then
$$(2t-3){\rm b}^{\ast}(G)\leq (t-2)n+(t-1)\chi(G)+g(t-2).$$\label{star-bound}
\end{prop}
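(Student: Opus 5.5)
We would follow the scheme of Theorem \ref{main}. By the first Proposition we fix a $\chi(G)$-coloring $c$ and a b$^{\ast}$-coloring $c^{\ast}$ with $b={\rm b}^{\ast}(G)$ colors whose overlapping array has echelon form. Let $u$ be a nice vertex, $B=\{u_1,\ldots,u_{b-1}\}$ a b-neighborhood of $u$, and $C_k$ (with $k=\chi(G)$) the class of $u$. For $i\in[t]$ let $g_i$ be the number of classes $C_j$ with $|C_j\cap B|=i$, and let $q$ be the number of classes missing $B$. Then $b-1=\sum_i ig_i$ and $\chi(G)=1+q+\sum_i g_i$.

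The key observation is that $B\subseteq N(u)$. Each class $C_j$ with $|C_j\cap B|=t$ is an independent set of size $t$ inside $N(u)$, and distinct classes are disjoint. Hence the number of such classes satisfies $g_t\leq d_t^{\sf star}(u)\leq {\Delta}_t^{\sf star}(G)=g$. Every other class meets $B$ in at most $t-1$ vertices. For these classes we would therefore redo the counting of Theorem \ref{main} with $t-1$ in place of $t$, and charge the at most $g$ full classes separately. This is exactly why the coefficients become $2t-3$, $t-2$, $t-1$, with the correction term $g(t-2)$.

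Concretely, as in Proposition \ref{lower}, a class $C$ with $|C\cap B|=i\geq 2$ forces at least $i$ vertices outside $B\cup\{u\}$. These vertices are distinct across classes, since they are the mutual neighbors of independent b-vertices of distinct colors. This gives
\[
n\geq 1+|B|+\sum_{i\geq 2} ig_i .
\]
We then want to verify
\[
(2t-3)b\leq (t-2)n+(t-1)\chi(G)+g(t-2).
\]
After substituting the expressions above, this reduces to a per-class inequality. Write $w(i)=(2t-3)i-(t-2)\cdot(\text{charge of the class})-(t-1)$, where the charge is $i$ for $i=1$ and $2i$ for $i\geq2$. We must show that $\sum_{\text{classes}} w(i)$ is at most $g(t-2)$ plus the contributions of $u$ and of the $q$ empty classes, which are harmless because $q$ appears only positively on the right, as in Theorem \ref{main}. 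A direct computation gives $w(1)=0$ and, for $2\leq i\leq t-1$, $w(i)=(2t-3)i-2(t-2)i-(t-1)=i-(t-1)\leq 0$. For $i=t$ we get $w(t)=t-(t-1)=1\leq t-2$ since $t\geq3$. Summing over the at most $g$ full classes yields the term $g(t-2)$, and the remaining classes contribute nonpositively. The vertex $u$ contributes $2t-3$ on the left against $(t-2)+(t-1)=2t-3$ on the right, so it balances.

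The main obstacle is making sure that the counting of the forced extra vertices, $\nu\geq\sum_{i\geq2} ig_i$, is rigorous: the neighbors of color $s$ required by $u_r$ for distinct pairs in the same class, and across different classes, must not be double-counted. In the paper this is asserted in Proposition \ref{lower}; we would rely on it as stated. If it only gives a weaker count, we would instead use the echelon normalization of Proposition \ref{lower} and show that the extremal configuration has class sizes in $\{1,p,t-1,t\}$, with at most $g$ of size $t$, and then check the inequality on that configuration, as in Theorem \ref{main}. The slack $t-2-1\geq 0$ per full class should leave room for this.
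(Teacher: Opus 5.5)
Your proof is correct. It closes the argument by a genuinely different route from the paper.

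\textbf{How the two routes differ.} The paper first uses the exchange procedure of Proposition \ref{lower} to put the configuration into ``modified echelon form'', where the intersections $|C_i\cap B|$ take only the values $1,p,t-1,t$. It then does a single computation in the parameters $f,g,k,p,q,\zeta$, ending with $RHS-LHS=(t-2)\zeta-p+g(t-3)+t-1\geq 0$. You instead observe that once you have $n\geq 1+|B|+\sum_{i\geq 2}ig_i$ and $g_t\leq d_t^{\sf star}(u)\leq g$, both sides are linear in the counts $g_i$. So it suffices to check the per-class weights: $w(1)=0$, $w(i)=i-(t-1)\leq 0$ for $2\leq i\leq t-1$, and $w(t)=1\leq t-2$.

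\textbf{What each route buys.} Your route makes the echelon form superfluous; you invoke it in your first sentence but never use it. It also avoids the rather informal step of ``moving'' b-vertices between classes. So the argument applies to an arbitrary $\chi(G)$-coloring $c$ and any maximum ${\rm b}^{\ast}$-coloring $c^{\ast}$. It further shows that the correction term $g(t-2)$ can be lowered to $g_t\leq g$, which matches the unused slack $g(t-3)$ in the paper's computation. The paper's route gives an explicit extremal configuration, which it reuses for the sharpness construction of Proposition \ref{exist}.

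\textbf{The counting step is already rigorous.} The worry in your last paragraph is unfounded, and the fallback is not needed. Suppose $C\cap B=\{u_{k_1},\ldots,u_{k_i}\}$ with $i\geq 2$. For each $s$, some $u_{k_r}$ with $r\neq s$ needs a neighbor of color $k_s$. The only vertex of $B\cup\{u\}$ with that color is $u_{k_s}$, which is not adjacent to $u_{k_r}$. Hence every color $k_s$ occurs outside $B\cup\{u\}$. These colors are pairwise distinct across all classes, so you get $\sum_{i\geq 2}ig_i$ distinct extra vertices. Restricting to $i\geq 2$ is correct: the sum in Proposition \ref{lower} as printed includes $i=1$, and that version is already false for $G=K_b$ with $b\geq 3$.

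\textbf{A hypothesis you both use silently.} Like the paper, you assume $\alpha(G)\leq t$ to get $|C_j\cap B|\leq t$. This hypothesis is missing from the statement but is genuinely needed. For $t=3$, join $u$ to $u_1,\ldots,u_6$, and join each $x_s$ to all $u_r$ with $r\neq s$. This graph has $n=13$, $\chi=2$, $g=2$ and ${\rm b}^{\ast}=7$, so the left side is $21$ while the right side is $19$.
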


\noindent \begin{proof}
The proof idea is similar to that of Theorem \ref{main}. Note that with the assumptions of the theorem the related quantity $\Psi(g_1, \ldots, g_t)$ corresponding to a pair $(c,c^{\ast})$ and an arrangement of b-vertices $B$ has a lower bound when the corresponding arrangement has modified echelon form with the restriction that the number of classes $C_i$ such that $|C_i\cap B|$ is at most $g$. This restriction implies that $\{|C_i\cap B|: 1\leq i \leq \chi(G)\}=\{1, p, t-1, t\}$, for some $2\leq p\leq t-2$. Note that if $t=3$ then $\{|C_i\cap B|: 1\leq i \leq \chi(G)\}=\{1, 2, 3\}$ and $p$ does not exist. Define $|\{i: |C_i\cap B|=t-1\}|=f$, $|\{i: |C_i\cap B|=t\}|=g$ and $|\{i: |C_i\cap B|=1\}|=k$. Let also $|\{i: |C_i\cap B|=0\}=q$. Now we can expand the main parameters in terms of $f, g, p, k$ and $q$. We have
\begin{equation}\label{eq5}
\begin{cases}
\chi(G)=f+g+k+1+q \quad\hspace{3cm} \\
			
{\rm b}^{\ast}(G)=(t-1)f+tg+p+k+1 \quad\hspace{0.3cm} \\
			
n\geq 2(t-1)f+2tg+2p+k+q \quad\hspace{1.75cm}.
\end{cases}
\end{equation}

\noindent The assertion is $(2t-3){\rm b}^{\ast}(G)\leq (t-2)n+(t-1)\chi(G)+g(t-2)$. Note that $q$ only appears positively in RHS. So it's enough to prove the bound for the case $q=0$. Write for simplicity $\chi(G)=c$ and ${\rm b}^{\ast}(G)=b$. We have $c=k+1+f+g$, $b=k+p+(t-1)f+tg$ and $n\geq 2(t-1)f+2tg+2p+k$. Define a variable $\zeta\geq 0$ so that $n = k + 2p + 2(t-1)f + 2gt + \zeta$. It follows that
$$RHS= (t-2)[k + 2p + 2(t-1)f + 2gt + \zeta] + (t-1)[k + f + 1 + g] + g(t-2)$$
$$=(2t-3)k + (2t-4)p + (2t-3)(t-1)f + g(2t^2 - 2t - 3) + (t-2)\zeta + t - 1.$$
\noindent The LHS expression is $(2t-3)b = (2t-3)[k + p + (t-1)f + tg]$. It is easily checked that
$$RHS - LHS = (t-2)\zeta - p + g(t-3) + t - 1.$$
\noindent Since $(t-2)\zeta$, $t-p-1$ and $g(t-3)$ are non-negative, then $RHS-LHS \geq t-2$. The inequality $(2t-3)b\leq (t-2)n + c(t-1) + g(t-2)$ is proved.
\end{proof}

\noindent In a graph $G$ denote by $\Delta_t^{\sf indep}(G)$ the maximum number of vertex disjoint independent subsets each of size $t$. The new quantity is equivalent to the maximum number of vertex disjoint cliques of size $t$ in the complement of $G$.

\begin{prop}
Let $G$ be a graph on $n$ vertices such that $3\leq \alpha(G)\leq t$. Then
$$(2t-3){\rm b}(G)\leq (t-2)n+(t-1)\chi(G)+(t-2)\Delta_t^{\sf indep}(G).$$\label{star-bound-b}
\end{prop}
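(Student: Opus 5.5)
The plan is to derive this statement from Proposition \ref{star-bound} in the same way Corollary \ref{main-b} was derived from Theorem \ref{main}, namely through the identity ${\rm b}(G)={\rm b}^{\ast}(G\vee K_1)-1$. Let $H=G\vee K_1$, with new universal vertex $v$. Then $|V(H)|=n+1$ and $\chi(H)=\chi(G)+1$. Also $\alpha(H)=\alpha(G)\leq t$, because $v$ lies in no independent set of size at least $2$.

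The main step is to bound ${\Delta}_t^{\sf star}(H)$ by $\Delta_t^{\sf indep}(G)$. For any vertex $w$ of $H$, a witness set $D\subseteq N_H(w)$ for $d^{\sf star}_t(w)$ is partitioned into $d$ vertex-disjoint independent sets of size $t$. Since $t\geq 3\geq 2$, none of these sets can contain $v$, as $v$ is adjacent to everything. Hence all $d$ sets are vertex-disjoint independent $t$-subsets of $G$. This gives $d^{\sf star}_t(w)\leq \Delta_t^{\sf indep}(G)$ for every $w$, and therefore ${\Delta}_t^{\sf star}(H)\leq \Delta_t^{\sf indep}(G)=:g_0$.

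Next I apply Proposition \ref{star-bound} to $H$; the hypothesis $t\geq 3$ holds. Since the right-hand side there is monotone increasing in the star parameter, replacing ${\Delta}_t^{\sf star}(H)$ by the larger $g_0$ should be harmless. The real content of that proof is the inequality $RHS-LHS=(t-2)\zeta-p+g(t-3)+t-1\geq 0$, which survives increasing $g$ in the additive term. I would note this explicitly rather than rely on equality in the hypothesis. This yields
$$(2t-3){\rm b}^{\ast}(H)\leq (t-2)(n+1)+(t-1)(\chi(G)+1)+(t-2)g_0 .$$

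Finally, I substitute ${\rm b}^{\ast}(H)={\rm b}(G)+1$ and subtract $(2t-3)$ from both sides. Since $(t-2)+(t-1)-(2t-3)=0$, this gives exactly
$$(2t-3){\rm b}(G)\leq (t-2)n+(t-1)\chi(G)+(t-2)\Delta_t^{\sf indep}(G).$$

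The only delicate point is the passage from an upper bound on ${\Delta}_t^{\sf star}(H)$ to the inequality, since Proposition \ref{star-bound} is phrased with ${\Delta}_t^{\sf star}(G)=g$. I expect no difficulty here: the statement with $g$ replaced by any upper bound follows immediately from monotonicity of the right-hand side in $g$. The lower bound $3\leq\alpha(G)$ is not needed beyond ensuring $t\geq 3$.
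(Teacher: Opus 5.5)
Your proposal is correct and follows the paper's proof. Both apply Proposition~\ref{star-bound} to $G\vee K_1$, use ${\rm b}(G)={\rm b}^{\ast}(G\vee K_1)-1$ together with $|V(G\vee K_1)|=n+1$ and $\chi(G\vee K_1)=\chi(G)+1$, and let the constants cancel; the one difference is that the paper asserts the equality $\Delta_t^{\sf indep}(G)={\Delta}_t^{\sf star}(G\vee K_1)$ (attained at the universal vertex), whereas you prove only ${\Delta}_t^{\sf star}(G\vee K_1)\leq \Delta_t^{\sf indep}(G)$ and use that the right-hand side is monotone in $g$, which is all that is needed.
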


\noindent \begin{proof}
Note that $\Delta_t^{\sf indep}(G)=\Delta_t^{\sf star}(G\vee K_1)$ and recall that ${\rm b}(G)={\rm b}^{\ast}(G\vee K_1)-1$. Applying Proposition \ref{star-bound} for $G\vee K_1$ we obtain
$$(2t-3){\rm b}(G)=(2t-3){\rm b}^{\ast}(G\vee K_1)-(2t-3)$$
$$\leq (t-2)(n+1)+(t-1)(\chi(G)+1)+\Delta_t^{\sf star}(G\vee K_1)(t-2)-(2t-3)$$
$$=(t-2)n+(t-1)\chi(G)+(t-2)\Delta_t^{\sf indep}(G).$$
\end{proof}

\noindent We make a comment on the computational complexity of $\Delta_t^{\sf star}(G)$ and $\Delta_t^{\sf indep}(G)$. Both problems seem to be $\NP$-hard. Let $k$ be a positive integer. By an exhaustive search we can decide whether $d_t^{\sf star}(u)=k$ by consuming ${\mathcal{O}}(\Delta(G)^{tk})$ search and check operations. Hence, the complexity of $\Delta_t^{\sf indep}(G)$ is $|V(G)|{\mathcal{O}}(\Delta(G)^{tk})$. Since $t$ is fixed, for fixed $k$ the latter complexity is polynomial in $|V(G)|$. A similar argument proves that the complexity of $\Delta_t^{\sf indep}(G)$ is ${\mathcal{O}}(|V(G)|^{tk})$.

\noindent It was proved in \cite{KZ} that if $G$ is a $K_{1,s}$-free graph then b$(G)\leq (s-1)(\chi(G)-1)+1$, where $s\geq 3$. We obtain a much better bound for the ${\rm b}^{\ast}$-chromatic number of these graphs.

\begin{prop}
Let $G$ be a $K_{1,s}$-free graph on $n$ vertices, where $s\geq 2$. Then
$$(2s-3){\rm b}^{\ast}(G)\leq (s-2)n+(s-1)\chi(G).$$
\noindent Moreover, the bound is sharp for each $s\geq 2$ and $\chi(G)\geq 2$. \label{star-free-b*}
\end{prop}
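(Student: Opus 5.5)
The plan is to observe that the nice vertex $u$ forces each class of the $\chi(G)$-coloring to meet the b-neighborhood $B$ of $u$ in at most $s-1$ vertices. We then rerun the counting of Theorem \ref{main} with $t$ replaced by $s-1$. Concretely, fix a pair $(c,c^{\ast})$ whose overlapping array has echelon form; it exists by the first proposition of Section 2. Let $u$ be the nice vertex and $B=\{u_1,\ldots,u_{b-1}\}$ its b-neighborhood. For any colour class $C_i$ of $c$, the set $C_i\cap B$ is independent and contained in $N(u)$. If $|C_i\cap B|\geq s$, then $u$ together with $s$ vertices of $C_i\cap B$ induces $K_{1,s}$, which is forbidden. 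Hence $|C_i\cap B|\leq s-1$ for every $i$. The independence number of $G$ itself may be large, so instead of $\alpha(G)$ we use this local parameter $t:=s-1$. We note that Proposition \ref{lower} and the proof of Theorem \ref{main} only ever used the bound $|C_i\cap B|\leq t$, never $\alpha(G)\leq t$ globally.

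Next we reproduce the counting in the proof of Theorem \ref{main} with $t=s-1$. For each class $C_i$ with $|C_i\cap B|=j\geq 2$, each pair of b-vertices $u_r,u_s\in C_i\cap B$ needs neighbours of colours $s$ and $r$ respectively. These neighbours lie outside $B$, because the b-vertices are pairwise nonadjacent. This gives the inequality $n\geq |B|+\Psi$, and Proposition \ref{lower} reduces the problem to a modified echelon form with multiplicities in $\{1,p,t\}$. The algebra of Theorem \ref{main} then gives $(2t-1){\rm b}^{\ast}\leq (t-1)n+t\chi$. Substituting $t=s-1$ yields $(2s-3){\rm b}^{\ast}(G)\leq (s-2)n+(s-1)\chi(G)$. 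For $s=2$ we have $t=1$: each class meets $B$ in at most one vertex, so ${\rm b}^{\ast}\leq\chi$, which matches the formula $1\cdot {\rm b}^{\ast}\leq 0\cdot n+\chi$. We check this case directly.

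The step needing the most care is justifying that the switching moves in Proposition \ref{lower} keep the bound $|C_i\cap B|\leq t$. The moves shift b-vertices between classes only as bookkeeping for a lower bound on $\Psi$, with caps at $t$, so this holds automatically. We will state explicitly that Proposition \ref{lower} holds under the weaker hypothesis.

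For sharpness, we adapt the construction of Proposition \ref{exist} with $t=s-1$. We check that the graph produced is $K_{1,s}$-free, which plausibly holds because its independence number is $t=s-1<s$. A graph with $\alpha(G)\leq s-1$ is automatically $K_{1,s}$-free, and the realizing graphs there attain the ${\rm b}^{\ast}$ bound with equality ($q=\zeta=0$). For $\chi=2$, or where that construction needs $\chi\geq3$, we give small explicit examples, for instance complete graphs or $K_{1,s-1}$-type graphs. Verifying the $\chi=2$ boundary case is the likely obstacle.
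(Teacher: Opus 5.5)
Your proposal is correct and follows the paper's own argument. The $K_{1,s}$-freeness at the nice vertex $u$ gives $|C_i\cap B|\le s-1$ for every class, and the counting of Theorem~\ref{main} uses only this local bound, so it goes through with $t=s-1$; sharpness then comes from Proposition~\ref{exist}, since $\alpha(G)\le s-1$ forces $K_{1,s}$-freeness.

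The boundary cases you flag ($s=2$ and $\chi(G)=2$) are indeed not covered by Proposition~\ref{exist}, which needs $t\ge 2$ and $c\ge 3$, but they are no obstacle. The complete graph $K_\chi$ is $K_{1,s}$-free with ${\rm b}^{\ast}(K_\chi)=\chi$, and $(s-2)\chi+(s-1)\chi=(2s-3)\chi$, so complete graphs attain equality for every $s\ge 2$ and every $\chi$. You should drop $K_{1,s-1}$, since ${\rm b}^{\ast}(K_{1,s-1})=2<\lfloor (s^2-2)/(2s-3)\rfloor$ once $s\ge 5$.
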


\noindent \begin{proof}
Let $\chi(G)=p$. Let $c$ and $c^{\ast}$ be colorings of $G$ such that their array has echelon form and consider the corresponding $g_1, \ldots, g_{p-1}$. Since $G$ is $K_{1,s}$-free then $g_i\leq s-1$. The situation is similar to the graphs with at most $s-1$ independent vertices. A same lower bound for $n$ as for graphs with the independence number at most $s-1$ holds. The required bound is deduced from Theorem \ref{main}. By Proposition \ref{exist}, there exists a graph $G$ satisfying $\alpha(G)=s-1$ and $\chi(G)=k$ such that ${\rm b}^{\ast}(G)=\lfloor \big[(s-2)n+(s-1)\chi(G)\big]/(2s-3) \rfloor$. Graph $G$ is $K_{1,s}$-free and satisfies the desired property.
\end{proof}

\section{Concluding remarks}

\noindent We end the paper by mentioning some unexplored research areas. In addition to the open problems concerning the ${\rm b}^{\ast}$-coloring mentioned in \cite{Z2}, it is interesting to explore ${\rm b}^{\ast}$-perfect graphs. That is graphs $G$ satisfying ${\rm b}^{\ast}(H)=\chi(H)$, for each induced subgraph $H$ of $G$. The ${\rm b}$-perfectness has been the research subject of many articles e.g. \cite{HK,HMM}. The ${\rm b}^{\ast}$-chromatic number of $P_4$-tidy graphs is another interesting problem. The related references are \cite{BDMMV,VBK,Z2}.

\noindent Theorem \ref{main-b} implies ${\rm b}(G)\leq (t^2/(2t-1))\chi(G)$. It follows that a polynomial time algorithm obtained by the b-coloring procedure provides a ratio ${\rm b}(G)/\chi(G)\leq t^2/(2t-1)$ which is better than a corresponding ratio $(t+1)/2$ for the First-Fit coloring. Is there any polynomial time vertex coloring algorithm $\mathcal{A}$ such that $\mathcal{A}(G)/\chi(G)$ is strictly better than $t^2/(2t-1)$, where $\mathcal{A}(G)$ is the number of colors used by $\mathcal{A}$ and $t\geq \alpha(G)\geq 3$? We insist on $t\geq 3$ since for graphs with $\alpha(G)=2$ there exits a polynomial time algorithm which computes $\chi(G)$.


\end{document}